\newtheorem{thm}{Theorem}[section]
\newtheorem{prob}{Problem}[section]
\newtheorem{lem}{Lemma}[section]
\newtheorem{conj}{Conjecture}[section]
\newtheorem{claim}{Claim}[section]
\allowdisplaybreaks\allowdisplaybreaks[4]
\begin{document}
\title{Two classes of connectivity-related non-Hamiltonian 1-planar perfect  graphs \footnote{Supported by the National Natural Science Foundation of China   (Grant No. 12271157),
the Natural Science Foundation of Hunan Province, China (Grant No. 2022JJ30028, 2023JJ30178), Natural Science Foundation of Changsha (No. kq2208001),  and Hunan Provincial Department of Education(No. 21A0590)}}

\author{ 
{\bf Licheng Zhang$^{a}$}\thanks{
lczhangmath@163.com}, {\bf Shengxiang Lv$^{b}$}\thanks {
lvsxx23@126.com}, {\bf Yuanqiu Huang $^{c}$}\thanks{Corresponding author:
hyqq@hunnu.edu.cn}
\\
\small $^{a}$ School of Mathematics, Hunan University, 
\small Changsha, 410082, China\\
\small $^{b}$ School of Mathematics and Statistics, Hunan University of Finance and Economics,\\
\small Changsha, 410205, China\\
\small $^{c}$College of Mathematics and Statistics, Hunan Normal University, 
\small Changsha, 410081, China 
}

\date{}
\maketitle {\flushleft\large \textbf{Abstract}}
The existence of Hamiltonian cycles in 1-planar graphs with higher connectivity has attracted considerable attention. Recently, the authors and Dong proved that 4-connected 1-planar chordal graphs are Hamiltonian-connected. In this paper, we investigate the non-Hamiltonicity of a broader class of graphs, specifically perfect graphs, under the constraint of 1-planarity, with a focus on connectivity of at most 5. We also propose some unsolved problems.

\begin{flushleft}
\textbf{Keywords:}
$1$-planar graph, perfect graph, Hamiltonicity, connectivity
\end{flushleft}
\textbf{MSC:} 05C17; 05C45; 05C62

\section{Introduction}
This paper considers only finite simple graphs.  For terminology and notation, we follow the books \cite{Bondy} and \cite{West}. For a graph $G$, let $V(G)$ and $E(G)$ denote its vertex set and edge set, respectively. A subset $S$ of $V(G)$ is called a \emph{separator} if $G-S$ is disconnected. For a non-complete graph $G$, its \emph{connectivity}, denoted by $\kappa(G)$, is defined to be the minimum size of any separator $S$ of $G$. If  $G$ is complete, then $\kappa(G)=|V(G)|-1$. A graph is \emph{$k$-connected} if its connectivity is at least $k$.
A \emph{Hamiltonian  cycle }in a graph $G$ is a  cycle that visits every vertex of
$G$ exactly once. A graph $G$ is  \emph{Hamiltonian }if  it contains a Hamiltonian cycle.

There has been a long-standing interest in finding Hamiltonian cycles in graphs embedded on surfaces.
 In planar graphs, Hamiltonicity is closely tied to connectivity, as demonstrated by Tutte's theorem, which asserts that every $4$-connected planar graph is Hamiltonian \cite{Tutte}.
Following this, researchers have established analogous Hamiltonicity results for graphs on other surfaces, such as $4$-connected projective-planar graphs \cite{Thomas} and $5$-connected toroidal graphs \cite{Kawarabayashi}.  For graphs with a small crossing number, Ozeki and  Zamfirescu \cite{Ozeki} proved that every 4-connected graph with a crossing number at most 2 is
Hamiltonian, and they constructed  4-connected non-Hamiltonian graphs with crossing
number at least 6.

\emph{$1$-planar graphs} serve as a natural generalization of planar graphs; they can be drawn in the plane such that each edge crosses at most once. This class of graphs has been extensively studied (see \cite{Barat, Biedl2022, Suzuki2010} for more details). 
In recent years, researchers have sought to establish foundational results regarding the Hamiltonicity of $1$-planar graphs with high connectivity.  
In 2012, Hud\'{a}k, Madaras, and Suzuki \cite{Hudak} proved that every $7$-connected maximal $1$-planar graph is Hamiltonian. In 2019, Biedl \cite{Biedl2019} improved upon this result by showing that every $4$-connected maximal $1$-planar graph is Hamiltonian. Subsequently, in 2020,  Fabrici et al. \cite{Fabrici} showed that a $3$-connected locally maximal $1$-planar graph $G$ is Hamiltonian if it has at most three $3$-vertex-cuts, which implies that all $4$-connected maximal $1$-planar graphs are Hamiltonian.  Despite these advances, there are still many non-Hamiltonian $1$-planar graphs with connectivity $4$ or $5$ \cite{Biedl2019,Fabrici}.  It is known that every $1$-planar graph with  $n\ge 3$
vertices has at most  $4n-8$ edges \cite{Bodendiek}, 
 and its minimum degree is at most $7$, which consequently implies its connectivity is also at most $7$. In fact, it remains an open question whether $1$-planar graphs with connectivity $6$ or $7$ are Hamiltonian.

\begin{prob}[\cite{Biedl2019,Fabrici}]\label{p1}
Is every 6-connected (or 7-connected) 1-planar graph  Hamiltonian?
\end{prob}

A \emph{hole} is  an induced cycle of length at least four, while a graph is \emph{chordal} if it contains no holes. Very recently, we (the authors) and Dong proved the following theorem.

\begin{thm}[\cite{Zhang}]
Every $4$-connected  $1$-planar chordal graph are Hamiltonian-connected.
\end{thm}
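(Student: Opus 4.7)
The plan is to induct on $|V(G)|$, using two structural handles: chordality, which supplies simplicial vertices with clique neighborhoods and forces every minimal vertex separator to be a clique, and $1$-planarity, which caps every clique at size $6$ because $K_7$ is not $1$-planar. Together with $4$-connectedness, these facts imply that every simplicial vertex $w$ satisfies $|N(w)| \geq 4$ and that $N[w]$ is a clique of size $5$ or $6$.

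For the base case I would handle $G \in \{K_5, K_6\}$ directly, together with the few small residual configurations produced by the reductions below. For the inductive step, fix distinct endpoints $u, v \in V(G)$ and pick a simplicial vertex $w \notin \{u, v\}$, which exists by the classical fact that a non-complete chordal graph has two non-adjacent simplicial vertices (Dirac). Set $G' := G - w$; it is still chordal and $1$-planar. If $G'$ happens to be $4$-connected, induction yields a Hamiltonian $(u,v)$-path $P$ in $G'$, and a short local argument inside the clique $N[w]$, using that $|N(w)| \geq 4$ while $P$ has only two endpoints, either locates two consecutive vertices of $N(w)$ on $P$ or rewrites $P$ along clique chords to produce such a pair; the vertex $w$ is then spliced in between them.

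The main obstacle is the case where $G - w$ drops to connectivity $3$. Then $G$ has a $4$-cut $S \ni w$, and because $S$ has minimum cardinality in the chordal graph $G$, it is a minimal separator and hence a clique, so $T := S \setminus \{w\}$ is a triangle contained in $N(w)$. I would then exploit the rigidity of the $1$-planar drawing around $N[w]$ (a clique of size $5$ or $6$ has essentially a unique $1$-planar pattern) to restrict the possible components of $G - S$ and their attachments to $S$. One subcase is that some component of $G - S$ is a single simplicial vertex, in which case we restart from that vertex and fall back into the good case; the remaining subcase requires splitting $G$ along the clique $S$ into two smaller $1$-planar chordal pieces, applying induction in a form that allows prescribing endpoints inside $S$, and stitching the two Hamiltonian paths through $S$. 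Verifying that the split pieces retain $4$-connectivity after suitable clique-augmentation, and that the stitching realizes every required endpoint pair $(u,v)$, is the step I expect to be the most delicate.
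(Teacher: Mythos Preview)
This theorem is not proved in the present paper; it is quoted from the authors' earlier preprint \cite{Zhang} as background for Theorem~\ref{main1}. So there is no proof here to compare against directly. The only information the current paper gives about the argument in \cite{Zhang} appears later, in the proof of Theorem~3.1: ``Zhang et al.\ \cite{Zhang} showed that any $4$-connected chordal $1$-planar graph with $n$ vertices is a $4$-tree, having exactly $4n-10$ edges.'' That structural lemma is the real engine: once you know the graph is a $4$-tree, every simplicial vertex has degree exactly $4$, the perfect-elimination ordering is extremely rigid, and Hamiltonian-connectedness follows by a clean induction.

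Your outline is in the right spirit but misses precisely this point. You allow $|N(w)|\in\{4,5\}$ and speak of cliques of size $5$ or $6$ with ``essentially unique $1$-planar patterns,'' but the correct structural fact pins $|N(w)|$ down to $4$ (outside trivially small cases). More importantly, the step you flag as ``the most delicate'' --- splitting along a clique separator $S$, augmenting the pieces to recover $4$-connectedness, and stitching Hamiltonian paths through $S$ for arbitrary endpoint pairs $(u,v)$ --- is a genuine gap as stated: there is no guarantee that clique-augmented pieces of a $1$-planar graph remain $1$-planar, nor that the induction hypothesis applies to them. Without the $4$-tree lemma (or an equivalent replacement), that step does not go through; with it, the case analysis you worry about largely evaporates.
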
   

 A graph $G$ is said to be \emph{perfect} if, for every induced subgraph of $G$, the clique number is equal to the chromatic number.  It is well known that chordal graphs are perfect \cite{Hougardy}.  This leads to the natural question of \emph{whether $4$-connected perfect graphs are either Hamiltonian or Hamiltonian-connected.} In this paper, we prove, by constructing two classes of 1-planar perfect graphs, that connectivity of $4$ or even $5$ is not sufficient to guarantee that these graphs are Hamiltonian.

 \begin{thm}\label{main1}
For any positive integer $k\le 5$, there exist infinitely many  non-Hamiltonian $1$-planar perfect graphs with connectivity $k$.
\end{thm}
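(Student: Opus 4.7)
The plan is to prove Theorem \ref{main1} constructively: for each $k\in\{1,2,3,4,5\}$, I would exhibit an explicit infinite family of graphs that are simultaneously $1$-planar, perfect, of connectivity exactly $k$, and non-Hamiltonian. Non-Hamiltonicity is certified throughout by the classical toughness obstruction, namely that a Hamiltonian graph must satisfy $c(G-S)\le |S|$ for every vertex cut $S$; it therefore suffices to produce, for each graph in the family, a single cut $S$ with $c(G-S)>|S|$.

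For the small cases $k\in\{1,2,3\}$, I would use the complete bipartite graphs $K_{k,n}$ with $n\ge k+1$. These are bipartite and hence perfect, have connectivity exactly $k$, and are non-Hamiltonian because their two parts have different sizes; they are planar for $k\le 2$, and $K_{3,n}$ is $1$-planar for $3\le n\le 6$. An infinite family for $k=3$ can be produced by starting from $K_{3,6}$ and attaching further $3$-connected bipartite gadgets, or by performing suitable blow-ups of a fixed vertex, in a way that preserves the $1$-planar drawing and the unequal bipartition, thereby generating infinitely many pairwise non-isomorphic examples.

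The principal work concerns $k=4$ and $k=5$. Here the naive candidates $K_{4,5}$ and $K_{5,6}$ are not $1$-planar, and by the theorem of Zhang et al.\ stated above the graphs I seek cannot be chordal (since every $4$-connected $1$-planar chordal graph is Hamiltonian-connected and hence Hamiltonian). My construction would therefore be a perfect but non-chordal graph $G$ built around a distinguished $k$-vertex cut $S$ together with $k+1$ or more pendant subgraphs $H_1,\ldots,H_{k+1}$ forming the components of $G-S$. Each $H_i$ is a small subgraph carrying enough internal edges to raise every pendant vertex to degree at least $k$, while the attachments between $S$ and $\bigcup_i H_i$ realize a bipartite pattern strictly below the forbidden subgraphs $K_{4,5}$ (when $k=4$) or $K_{5,6}$ (when $k=5$). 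A $1$-planar drawing would then be built by placing $S$ centrally and arranging the pendants cyclically around it in disjoint planar slices, with at most one crossing per edge; $k$-connectivity would be verified by a case analysis ruling out smaller cuts; and perfectness would be ensured either by recognizing $G$ as a member of a standard perfect class (bipartite graphs, joins of perfect graphs, cobipartite graphs, and so on) or by directly checking that no induced odd hole or antihole of length at least five appears.

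The main obstacle, concentrated in the cases $k\in\{4,5\}$, is the simultaneous reconciliation of three tight constraints: $k$-connectivity, which forces high minimum degree at every pendant vertex; $1$-planarity, which excludes $K_{4,5}$ (resp.\ $K_{5,6}$) as a subgraph and caps the edge count by $4n-8$; and the existence of a cut violating the toughness condition, which requires many pendant components. These pull against each other, so the gadgets $H_i$ cannot be chosen as independent vertices (which would force the forbidden complete bipartite subgraphs) nor as structures too large (which would either exceed the edge bound or permit small unwanted cuts). Once a single valid graph is constructed and verified for each of $k=4$ and $k=5$, an infinite family follows either by threading additional copies of the same gadget into the cyclic drawing or by performing bipartite blow-ups of carefully chosen vertices, with $1$-planarity, $k$-connectivity, and perfectness re-checked at every step.
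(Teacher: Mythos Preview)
Your proposal is a plan rather than a proof, and the gap sits exactly where the content of the theorem lies: the cases $k=4$ and $k=5$. You correctly identify the tension between $k$-connectivity, $1$-planarity, and a toughness-violating cut, but you do not resolve it; you only describe the \emph{shape} a construction might take (a central $k$-set $S$ with $k{+}1$ pendant gadgets $H_i$) without exhibiting a single such graph or verifying any of its four required properties. Every difficulty you list in your final paragraph is real, and none is overcome. For $k=3$ the situation is similar: as you note, $K_{3,n}$ is $1$-planar only for finitely many $n$, and ``attach further $3$-connected bipartite gadgets, or perform suitable blow-ups'' is not a construction until one is actually specified and all four properties are rechecked.

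The paper's route is concrete and rather different from your template. For $k=4$ it uses double-stellating quadrangulations: into every face of a plane quadrangulation $Q$ insert two new vertices, each adjacent to the four boundary vertices of that face; perfectness is then proved by a short direct $3$-colouring argument (Lemma~\ref{lem-5}), while $1$-planarity, connectivity $4$, and non-Hamiltonicity are quoted from Biedl. For $k=5$ the paper does \emph{not} try to manufacture a $5$-cut with six components. Instead it builds an explicit bipartite $1$-planar graph $H_k$ out of four nested even cycles with prescribed cross-edges, proves $\kappa(H_k)=5$ by a case analysis on how a putative $4$-element separator meets the four cycles, and then adds a single vertex to obtain a bipartite $G_k$ on $8k{+}1$ vertices; non-Hamiltonicity follows immediately from the odd order (Lemma~\ref{lem-3}), which sidesteps your gadget-packing problem entirely. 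For $k=3$ the paper simply cites an existing infinite family of $3$-connected $1$-planar chordal (hence perfect) non-Hamiltonian graphs.
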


It is known that bipartite graphs are perfect. The non-Hamiltonian 1-planar perfect graphs with connectivity 5 presented in this paper are bipartite, setting them apart from those in the existing literature, which are neither bipartite nor perfect.
 
 Before proving the main theorem, we will introduce some definitions and notations. A planar drawing divides the plane into topologically connected regions called \emph{faces}. A $1$-plane graph also divides the plane into faces, but the boundaries of these faces may consist of edge segments between vertices and/or crossing points of edges. If the boundary of a face $f$  in a $1$-plane drawing 
consists solely of vertices (with no crossing points), then 
$f$ is  referred to as an \emph{uncrossed face},  otherwise, it is called a \emph{crossed face}.
 
%
Given a graph $G=(V(G), E(G))$ and two subsets $X, Y\subset V(G)$ such that $X\cap Y=\emptyset$, we denote $E_G(X, Y)$ as the set of edges in $E(G)$ that have one end in $X$ and the other end in $Y$.
 When there is only one vertex $x$ in $X$, we denote $E_G(x, Y)$ as the set of edges in the graph 
$G$ that connect the vertex $x$ to the vertices in the set $Y$. 
We use $\omega(G)$ to denote the clique number of a graph and $\chi(G)$ to represent its chromatic number. We denote by $N_G(x)$ the neighborhood  of a vertex
$x$ in a graph $G$.

\section{Proof of Theorem \ref{main1}}

To obtain our main results, we will first present the following lemmas.

\begin{lem}[\cite{Hougardy}]\label{lem-1}
(i) ~Every bipartite graph is perfect; ~(ii) ~Every chordal graph is perfect.
\end{lem}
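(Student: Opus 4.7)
The plan is to observe that both bipartite graphs and chordal graphs form hereditary classes (closed under taking induced subgraphs), so to verify the defining equality $\omega(H)=\chi(H)$ on every induced subgraph $H$ it suffices to establish $\omega(G)=\chi(G)$ for the whole graph $G$ in each case; the full perfectness statement then follows because an induced subgraph of a bipartite (resp.\ chordal) graph is again bipartite (resp.\ chordal).

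For (i) the argument is very short. A bipartite graph $G$ contains no triangle, so $\omega(G)\le 2$. If $E(G)=\emptyset$ then $\omega(G)=\chi(G)=1$; otherwise $\omega(G)=2$, and the bipartition itself yields a proper $2$-colouring, so $\chi(G)\le 2=\omega(G)$. The reverse inequality $\omega(G)\le\chi(G)$ holds in every graph, giving equality.

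For (ii) I would induct on $|V(G)|$, using the classical lemma (Dirac) that every chordal graph admits a \emph{simplicial} vertex, i.e.\ a vertex $v$ whose neighbourhood $N_G(v)$ induces a clique. Given such a $v$, the induced subgraph $G-v$ is chordal, so by the inductive hypothesis it has a proper colouring $c$ using $\omega(G-v)$ colours. Since $N_G(v)$ is a clique of size $d(v)$, the neighbours of $v$ receive $d(v)$ distinct colours under $c$. If $d(v)<\omega(G-v)$, assign $v$ any colour of $c$ that is unused on $N_G(v)$, obtaining $\chi(G)\le\omega(G-v)\le\omega(G)$. If $d(v)=\omega(G-v)$, then $\{v\}\cup N_G(v)$ is a clique of size $\omega(G-v)+1$, so $\omega(G)\ge\omega(G-v)+1$, and using one additional colour for $v$ still yields $\chi(G)\le\omega(G)$. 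Combined with $\omega(G)\le\chi(G)$, this gives equality.

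The main obstacle is Dirac's lemma on the existence of a simplicial vertex; without it the induction in (ii) cannot start. I would establish it by induction on $|V(G)|$ via the minimal separator characterisation of chordality: if $G$ is not complete, one shows every minimal vertex separator $S$ of $G$ induces a clique (this is where the no-holes hypothesis is used, by shortcutting a would-be induced cycle through a chord), and then applies induction to a component of $G-S$ together with $S$ to locate a simplicial vertex outside $S$. Once this structural fact is in hand, parts (i) and (ii) follow cleanly by the arguments above.
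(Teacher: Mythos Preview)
Your argument is correct: both parts are the standard textbook proofs, and the reduction to showing $\omega(G)=\chi(G)$ via hereditariness is sound. In (ii) your case split is exhaustive because $N_G(v)$, being a clique contained in $G-v$, forces $d(v)\le\omega(G-v)$, so the two cases $d(v)<\omega(G-v)$ and $d(v)=\omega(G-v)$ cover everything.

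Note, however, that the paper does not prove this lemma at all: it is stated with a citation to Hougardy and used as a black box. So there is no ``paper's proof'' to compare against; you have simply supplied the classical arguments (K\H{o}nig for bipartite, Dirac's simplicial-vertex induction for chordal) that the cited reference would contain.
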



\begin{lem}[page 287, \cite{West}]\label{lem-3}
Let $G=(X,Y)$ be a bipartite graph. If $G$ is  Hamiltonian, then $|X|=|Y|$, which implies that $|V(G)|$ is even.
\end{lem}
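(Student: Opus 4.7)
The plan is to exploit the defining property of a bipartite graph, namely that every edge connects $X$ to $Y$, and combine this with the cyclic structure of a Hamiltonian cycle. Let $C = v_1 v_2 \cdots v_n v_1$ be a Hamiltonian cycle of $G$, where $n = |V(G)|$.

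First, I would observe that since every edge $v_i v_{i+1}$ of $C$ is an edge of $G$ and $G$ is bipartite with parts $X$ and $Y$, the endpoints of each such edge lie in different parts. Hence, as we traverse $C$, the part alternates at each step: if $v_1 \in X$, then $v_2 \in Y$, $v_3 \in X$, and so on, so that $v_i \in X$ exactly when $i$ is odd. Because $C$ closes up with the edge $v_n v_1$, which must also join $X$ to $Y$, the indices $1$ and $n$ must have opposite parities, forcing $n$ to be even. Writing $n = 2m$, exactly the odd-indexed vertices $v_1, v_3, \ldots, v_{2m-1}$ belong to one part and the even-indexed vertices $v_2, v_4, \ldots, v_{2m}$ to the other, so each part has precisely $m$ vertices, and therefore $|X| = |Y| = m$. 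The conclusion $|V(G)| = n = 2m$ even follows immediately.

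There is essentially no obstacle here; the only thing one needs to be careful about is the "closing" edge $v_n v_1$, which is what forces the length to be even rather than merely forcing $|\,|X|-|Y|\,|\le 1$ (the latter would follow just from alternation along a Hamiltonian \emph{path}). Since the statement is entirely elementary and appears as a textbook exercise in \cite{West}, the presentation can be kept to a couple of sentences.
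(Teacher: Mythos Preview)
Your argument is correct and is the standard elementary proof of this fact. Note that the paper does not supply its own proof of this lemma at all; it is simply cited from West's textbook, so there is nothing to compare against beyond observing that your write-up matches the classical alternation argument one finds there.
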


\begin{lem}[page 213, \cite{Bondy}]\label{lem-4}
Let $G$ be a $k$-connected graph, and let $H$ be a graph obtained from $G$ by adding a new vertex $y$ and connecting it to at least $k$ vertices of $G$. Then $H$ is also $k$-connected.
\end{lem}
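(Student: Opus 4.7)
The plan is to argue by contradiction, splitting according to whether the new vertex $y$ lies in a hypothetical small separator of $H$. First I would record the easy size requirement: since $G$ is $k$-connected, $|V(G)|\ge k+1$, hence $|V(H)|\ge k+2$, so the definitional side condition $|V(H)|\ge k+1$ for $k$-connectedness is automatic and cannot be the issue.

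Next, suppose for contradiction that some $S\subseteq V(H)$ with $|S|\le k-1$ is a separator of $H$, i.e.\ $H-S$ is disconnected. In the first case $y\in S$, I would set $S'=S\setminus\{y\}$, so $S'\subseteq V(G)$ and $|S'|\le k-2<k$. Because $G$ is $k$-connected, $G-S'$ is connected. But $H-S$ is obtained from $G-S'$ by removing only the vertex $y$ (which was already in $S$), so $H-S=G-S'$, contradicting the choice of $S$.

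In the second case $y\notin S$, we have $S\subseteq V(G)$ with $|S|\le k-1<k$, so $G-S$ is connected by the $k$-connectedness of $G$. Since $y$ is joined to at least $k$ vertices of $G$ while $|S|\le k-1$, at least one neighbor of $y$ lies in $V(G)\setminus S$. Consequently, in $H-S$ the vertex $y$ is adjacent to the connected subgraph $G-S$, so $H-S$ itself is connected, again contradicting the choice of $S$. The two cases together show $\kappa(H)\ge k$.

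There is no real obstacle here: the argument is bookkeeping with the definition of connectivity. The only point that needs a moment of care is the convention that a $k$-connected graph $G$ satisfies $|V(G)|\ge k+1$ (so that $\kappa(K_n)=n-1$), which is what ensures the two cases above are not vacuous when $G$ is close to complete and what legitimately upgrades $\kappa(H)\ge k$ to the conclusion that $H$ is $k$-connected.
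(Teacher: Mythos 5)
Your argument is correct and complete: the case split on whether $y$ belongs to the putative small separator, together with the observation that at least one of $y$'s $k$ neighbours survives the deletion of at most $k-1$ vertices, is exactly the standard proof of this fact. The paper itself gives no proof here --- it cites the result directly from Bondy and Murty (page 213) --- and your write-up is the same textbook argument, including the correct handling of the $|V(G)|\ge k+1$ convention, so there is nothing to fix.
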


A \emph{quadrangulation} is a plane graph in which all faces are bounded by a cycle of length $4$.
Let $Q_s$ be the graph obtained by \emph{double-stellating }every face of a quadrangulation $Q$, which involves inserting  two new vertices into each face of $Q$ that are adjacent to all vertices of that face.
 We refer to $Q_s$  as a \emph{double-stellating quadrangulation}.  An example is shown in Fig. \ref{f1}. The newly inserted vertices are called \emph{stellating  vertices} in $Q_s$.  The term double-stellating is used in accordance with  Biedl \cite{Biedl2019}.

 \begin{figure}
 \centering 
 \begin{tikzpicture}[ scale=0.4, bezier bounding box,
    whitenode/.style={circle, draw=black, fill=red, thick,minimum size=0.5mm, inner sep =3pt},  
    bluenode/.style={circle, draw=black, fill=blue, thick,minimum size=0.5mm, inner sep =3pt},  
    springnode/.style={circle, draw=black, fill=white, minimum size=0.5mm, inner sep =2pt},  
    black_thick/.style={line width=2pt},  
    blackedge/.style={line width=0.5pt}  
]

 \begin{scope}[xshift=-12cm,yshift=-1cm]
	\node [style=whitenode] (0) at (-5, 7) {};
		\node [style=bluenode] (1) at (5, 7) {};
		\node [style=bluenode] (2) at (-5, -3) {};
		\node [style=whitenode] (3) at (5, -3) {};
		\node [style=bluenode] (4) at (-2, 4) {};
		\node [style=whitenode] (5) at (-2, 0) {};
		\node [style=whitenode] (6) at (2, 4) {};
		\node [style=bluenode] (7) at (2, 0) {};

		\draw [style={black_thick}] (1) to (0);
		\draw [style={black_thick}] (0) to (2);
		\draw [style={black_thick}] (2) to (3);
		\draw [style={black_thick}] (1) to (3);
		\draw [style={black_thick}] (0) to (4);
		\draw [style={black_thick}] (4) to (6);
		\draw [style={black_thick}] (6) to (7);
		\draw [style={black_thick}] (7) to (5);
		\draw [style={black_thick}] (5) to (4);
		\draw [style={black_thick}] (5) to (2);
		\draw [style={black_thick}] (7) to (3);
		\draw [style={black_thick}] (6) to (1);

 \end{scope}

 \begin{scope}[xshift=5cm]

      	\node [style=whitenode] (0) at (-5, 6) {};
		\node [style=bluenode] (1) at (5, 6) {};
		\node [style=bluenode] (2) at (-5, -4) {};
		\node [style=whitenode] (3) at (5, -4) {};
		\node [style=bluenode] (4) at (-2, 3) {};
		\node [style=whitenode] (5) at (-2, -1) {};
		\node [style=whitenode] (6) at (2, 3) {};
		\node [style=bluenode] (7) at (2, -1) {};
		\node [style=springnode] (8) at (-2, 4.5) {};
		\node [style=springnode] (9) at (2, 4.5) {};
		\node [style=springnode] (10) at (-3.5, 3) {};
		\node [style=springnode] (11) at (-3.5, -1) {};
		\node [style=springnode] (12) at (-0.75, 1) {};
		\node [style=springnode] (13) at (0.75, 1) {};
		\node [style=springnode] (14) at (3.5, 3) {};
		\node [style=springnode] (15) at (3.5, -1) {};
		\node [style=springnode] (16) at (-1.25, -2.75) {};
		\node [style=springnode] (17) at (1.5, -2.75) {};
		\node [style=springnode] (18) at (0, 8) {};
		\node [style=springnode] (19) at (0, -5.75) {};
 
        \draw [black_thick] (1) -- (0) -- (2) -- (3) -- (1);  
        \draw [black_thick] (0) -- (4) -- (6) -- (7) -- (5) -- (4);  
        \draw [black_thick] (5) -- (2) (7) -- (3) (6) -- (1);  
          
        \draw [blackedge] (0) -- (10) -- (4);  
        \draw [blackedge] (10) -- (2);  
        \draw [blackedge] (11) -- (4);  
        \draw [blackedge] (11) -- (2);  
        \draw [blackedge] (11) -- (0);  
        \draw [blackedge] (11) -- (5);  
        \draw [blackedge] (10) -- (5);  
        \draw [blackedge] (4) -- (12);  
        \draw [blackedge] (12) -- (5);  
        \draw [blackedge] (12) -- (7);  
        \draw [blackedge] (12) -- (6);  
        \draw [blackedge] (13) -- (4);  
        \draw [blackedge] (13) -- (5);  
        \draw [blackedge] (13) -- (6);  
        \draw [blackedge] (13) -- (7);  
        \draw [blackedge] (6) -- (14);  
        \draw [blackedge] (14) -- (1);  
        \draw [blackedge] (14) -- (3);  
        \draw [blackedge] (14) -- (7);  
        \draw [blackedge] (6) -- (15);  
        \draw [blackedge] (15) -- (3);  
        \draw [blackedge] (15) -- (7);  
        \draw [blackedge] (15) -- (1);  
        \draw [blackedge] (0) -- (8);  
        \draw [blackedge] (8) -- (4);  
        \draw [blackedge] (8) -- (6);  
        \draw [blackedge] (8) -- (1);  
        \draw [blackedge] (0) -- (9);  
        \draw [blackedge] (9) -- (4);  
        \draw [blackedge] (9) -- (6);  
        \draw [blackedge] (9) -- (1);  
        \draw [blackedge] (18) -- (0);  
        \draw [blackedge] (18) -- (1);  
        \draw [blackedge] (5) -- (16);  
        \draw [blackedge] (16) -- (2);  
        \draw [blackedge] (16) -- (7);  
        \draw [blackedge] (5) -- (17);  
        \draw [blackedge] (17) -- (3);  
        \draw [blackedge] (7) -- (17);  
        \draw [blackedge] (17) -- (2);  
        \draw [blackedge] (3) -- (16);  
        \draw [blackedge] (2) -- (19);  
        \draw [blackedge] (19) -- (3);  
		\draw [style=blackedge, in=-135, out=180, looseness=1.50] (19) to (0);
		\draw [style=blackedge, in=-45, out=0, looseness=1.50] (19) to (1);
		\draw [style=blackedge, in=135, out=-180, looseness=1.50] (18) to (2);
		\draw [style=blackedge, in=45, out=0, looseness=1.50] (18) to (3);
         \end{scope}
\end{tikzpicture}  
\caption{On the left is a quadrangulation $Q$, and on the right is the double-stellating quadrangulation $Q_s$.}
\label{f1}
  \end{figure}
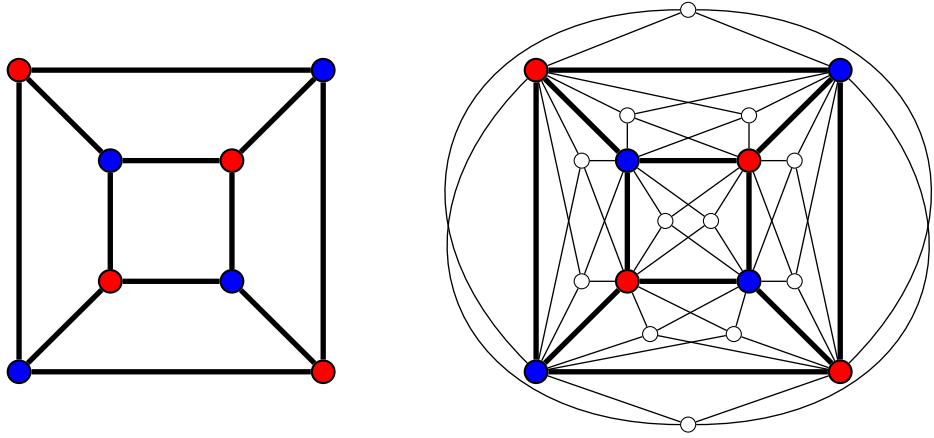

\begin{lem}\label{lem-5}
Let $Q_s$ be a double-stellating quadrangulation. Then, $Q_s$ is perfect.
\end{lem}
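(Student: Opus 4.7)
The plan is to establish the perfection of $Q_s$ by combining two structural facts. First, I would show $Q_s$ is $3$-colorable, so every induced subgraph $H$ satisfies $\omega(H)\le\chi(H)\le 3$. Second, I would show $Q_s$ contains no induced odd cycle of length $\ge 5$. Together these give the result: if $\omega(H)=3$ then $\chi(H)\le 3=\omega(H)$; if $\omega(H)\le 2$ then $H$ is triangle-free, so by the second fact $H$ has no odd cycle at all, whence $H$ is bipartite and $\chi(H)=\omega(H)$.

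For the first fact, I would exhibit a proper $3$-coloring explicitly. Since every face of the quadrangulation $Q$ has (even) length $4$, $Q$ is bipartite with some bipartition $V(Q)=X\cup Y$. Letting $S$ denote the set of stellating vertices, I would color via the partition $V(Q_s)=X\cup Y\cup S$. This is proper because the edges of $Q$ go between $X$ and $Y$, the edges incident to a stellating vertex $s$ only reach the four vertices of the face stellated by $s$ (all of which lie in $X\cup Y$), and no two stellating vertices are adjacent by the definition of double-stellation.

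The crux is the second fact, which I would prove by a parity argument. Assume for contradiction that $C=v_1\cdots v_{2k+1}v_1$ is an induced cycle of odd length $\ge 5$ in $Q_s$, with exactly $t$ vertices in $S$. Since $S$ is independent, stellating vertices on $C$ are never consecutive. For each such stellating vertex $s\in V(C)$, its two cycle-neighbors lie in the $4$-face $f_s$; if they were adjacent in $Q$ they would give a chord of $C$, so they must be diagonally opposite corners of $f_s$ and hence lie in the same class of $(X,Y)$. Now I would traverse the cyclic subsequence $u_1,\ldots,u_m$ of non-stellating vertices on $C$, where $m=2k+1-t$, and observe that each consecutive pair $u_i,u_{i+1}$ is joined in $C$ either by a direct edge of $Q$ (swapping bipartite class) or by a length-$2$ detour through a single stellating vertex (preserving class). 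This yields exactly $m-t=2k+1-2t$ class swaps and $t$ preservations; in order to close the cycle the number of swaps must be even, yet $2k+1-2t$ is odd — contradiction. The degenerate case $t=0$ places $C$ entirely inside the bipartite $Q$, which also forbids odd cycles.

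I expect the parity argument to be the main technical hurdle, since it requires simultaneously exploiting the bipartite structure of $Q$ (a consequence of all faces having length $4$) and the key opposite-corner observation, which only applies because $C$ is chord-free (being induced). Once this is in place, the conclusion follows by a short case analysis on $\omega(H)$ as outlined above.
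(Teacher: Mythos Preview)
Your proof is correct but takes a different route from the paper's. Both begin with the $3$-partition $X\cup Y\cup S$, but the paper then works directly on an arbitrary induced subgraph $H$: it colors $V(H)\cap V(Q)$ via the bipartition of $Q$ and greedily colors each stellating vertex $w\in V(H)$ with the opposite color if $N_H(w)$ is monochromatic, else with a third color. The punchline is that if some $w$ requires the third color, then $w$ has neighbors of both colors in $H$; since the four corners of $w$'s face alternate colors, two such neighbors must be \emph{consecutive} on the face, so $w$ lies in a triangle of $H$. Hence either $H$ is $2$-colorable or $\omega(H)=3$, and in both cases $\chi(H)=\omega(H)$. You instead establish the global structural fact that $Q_s$ is odd-hole-free via a parity count along a putative induced odd cycle. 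Both arguments rest on the same face-level dichotomy---two neighbors of a stellating vertex in a $4$-face are either consecutive (hence adjacent in $Q$) or diagonal (hence in the same bipartite class)---but invoke it in opposite directions: the paper uses the consecutive case to exhibit a triangle, while you use the diagonal case to show the bipartite class is preserved across each stellating vertex on $C$. The paper's route is shorter and fully constructive; yours yields the slightly stronger conclusion that $Q_s$ contains no odd hole.
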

\begin{proof}
Let $S$ be the set of all stellating vertices of $Q_s$. It is clear that $S$ forms an independent set  in $Q_s$. Since $Q$ is bipartite, it follows that $Q_s$  is tripartite, implying $\chi(Q_s)\le 3$. Furthermore, since $Q_s$ contains a triangle, we have  $\omega(Q_s)=3$,  thus $\chi(Q_s)=3$. 

Let $H$ be an induced subgraph of $Q_s$. By the definition of perfect graphs, it suffices to show that $\omega(H)=\chi(H)$.
\begin{itemize}
  \item[(1).] If $V(H)=\emptyset$, then $\omega(H)=\chi(H)=0$.
  \item[ (2).]If $V(H)\ne \emptyset$ and $E(H)= \emptyset$,  then $\omega(H)=\chi(H)=1$. 
  \item[ (3).] If $V(H)\ne \emptyset$ and $E(H)\ne \emptyset$, then $2 \le \omega(H) \le \chi(H) \le \chi(Q_s) =3$. 
\end{itemize}

We can 3-color $ H$ by first 2-coloring $ Q $ with red and blue, and then letting $ V(H)\cap V(Q) $ inherit the 2-coloring from $Q$. We color stellating vertices $w$ in $H$ in the following order:
\begin{itemize}
\item[(step 1)]if all vertices in $N_H(w)$ are of  a single color (e.g., red), we assign $ w$ the opposite color (blue). 
\item[(step 2)]if $N_H(w)$ contains both red and blue vertices, we assign $ w $ a third color.
\end{itemize}

  If all stellating vertices follow the first step, $H$ is bipartite, leading to $\omega(H) =\chi(H) =2$. 
  Otherwise, there exists a stellating vertex $u$ with two neighbors $x$ and $y$ of opposite colors in $H$. These vertices $x$ and $y$ are consecutive vertices of the face in $Q$ that $u$ and another stellating  vertex have double-stellated. Thus, $u,x,$ and $y$ induce  a triangle in $H$. In this case, $\omega(H) = \chi(H)=3$.

Therefore, $Q_s$ is perfect, as desired.
\end{proof}

\begin{proof}[\textbf{Proof of Theorem~\ref{main1}}]

Clearly,  $k=1$ is trivial since graphs with connectivity one are all non-Hamiltonian. For $k=2$, we consider bipartite graphs 
$K_{2,n}$
  where $n \ge 3$. These graphs are planar (and also $1$-planar), perfect by Lemma \ref{lem-1}(i), and non-Hamiltonian according to Lemma \ref{lem-3}. For $k=3$, the authors in \cite{Zhang} constructed non-Hamiltonian $1$-planar chordal graphs with connectivity $3$, which are perfect by Lemma \ref{lem-1}(ii). For $k=4$, double-stellating quadrangulations meet the requirements. By Lemma \ref{lem-5}, they are perfect and clearly $1$-planar graphs. Their non-Hamiltonicity and connectivity of $4$ are proved in Theorem 1 of \cite{Biedl2019}.  For $k = 5$, we shall construct a class of graphs $G_k$.Before that, we need to define the subgraphs $H_k$ of $G_k$ and provide some properties of the subgraphs $H_k$.

Let $k \geq 10$ and $k$ be even. We begin by defining $4$ distinct cycles 
\begin{eqnarray*}
C_1=b_1b_2b_3\cdots b_k, & &C_2=u_1u_2u_3\cdots u_{3k}, \\
C_3=v_1v_2v_3\cdots v_{3k}, &  & C_4=a_1a_2a_3\cdots a_{k}.
\end{eqnarray*}
 and the following edge sets
 \begin{eqnarray*}
E(C_1,C_2)&=&\{b_iu_{3i-2}, b_iu_{3i}, b_iu_{3i+2}| 1\le i\le k\}, \\
E(C_2,C_3)&=& \{u_iv_{i-1},u_iv_{i+1}|1\le i\le 3k\},\\
E(C_4,C_3)&=& \{a_iv_{3i-2}, a_iv_{3i}, a_iv_{3i+2}| 1\le i\le k\}.
\end{eqnarray*}
where $u_{3k+2}=u_2$, $v_{3k+1}=v_1$ and $v_{3k+2}=v_2.$

Next, we construct a $1$-planar graph $H_k=(V(H_k), E(H_k))$ as illustrated in Fig. \ref{fig3}, where
$$V(H_k)=\bigcup_{i=1}^{4}V(C_i), ~~E(H_k)=\bigcup_{i=1}^4 E(C_i)\bigcup E(C_1,C_2)\bigcup E(C_2,C_3)\bigcup E(C_4,C_3).$$

For any vertex $x_i\in V(C_i)$(where $i=1,2,3,4$), it is evident from the definition of $H_k$ that 
$$|E\big(x_1, V(C_2)\big)|=|E\big(x_4, V(C_3)\big)|=3, ~|E\big(x_2, V(C_1)\big)|=|E\big(x_3, V(C_4)\big)|=1, $$
$$ |E\big(x_2, V(C_3)\big)|= |E\big(x_3, V(C_2)\big)|=2.$$
For convenience, we define $\theta_{i,j}=|E\big(x_i, V(C_j)\big)|$ for $i\neq j$.

%
%
%

\begin{claim}
$\kappa(H_k)=5$.
\end{claim}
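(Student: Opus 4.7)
My plan is to prove $\kappa(H_k) = 5$ in two directions. The upper bound $\kappa(H_k) \le 5$ is immediate from a degree count: every vertex of $H_k$ has degree exactly $5$ (each $b_i$ has two neighbours in $C_1$ and three in $C_2$; each $u_j$ has two in $C_2$, exactly one in $C_1$, and two in $C_3$; the $a_i$'s and $v_j$'s are symmetric), so $\delta(H_k) = 5$. For the lower bound I would suppose $S \subseteq V(H_k)$ with $|S| \le 4$ separates $H_k$ and derive a contradiction, writing $S_i = S \cap V(C_i)$ and splitting on whether $S_1 \cup S_4$ is empty.

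\emph{Case A: $S_1 \cup S_4 = \emptyset$.} Both $C_1$ and $C_4$ remain intact cycles in $H_k - S$. Every $u_j \in V(C_2) - S_2$ has its unique $C_1$-neighbour outside $S$, so $V(C_1) \cup (V(C_2) - S_2)$ lies in one component of $H_k - S$; symmetrically, $V(C_4) \cup (V(C_3) - S_3)$ lies in one component. Of the $6k$ edges in $E(C_2, C_3)$, at most $2(|S_2| + |S_3|) \le 8$ are incident to $S$, so at least $6k - 8 > 0$ surviving cross-edges merge the two components, contradicting the assumption that $S$ is a separator.

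\emph{Case B: $S_1 \cup S_4 \ne \emptyset$, so $|S_2| + |S_3| \le 3$.} The crucial sub-step is that the middle subgraph $G'' := H_k[V(C_2) \cup V(C_3)]$ is $4$-connected, which implies that $G'' - (S_2 \cup S_3)$ is connected. I would prove this by noting that $G''$ is $4$-regular and vertex-transitive (its automorphism group contains both the rotation $u_i \mapsto u_{i+1}$, $v_i \mapsto v_{i+1}$ and the layer swap $u_i \leftrightarrow v_i$), and invoking Mader's theorem for vertex-transitive graphs; alternatively, a direct argument works, using that $G''$ is triangle-free and any two of its vertices share at most $4$ common neighbours, with equality only for pairs $\{u_i, v_i\}$ whose common neighbourhood $\{u_{i-1}, u_{i+1}, v_{i-1}, v_{i+1}\}$ is independent. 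It then remains to attach each $b_i \in V(C_1) - S_1$ to $G'' - S$: either $b_i$ has a neighbour in $V(C_2) - S_2$ (and we are done), or $\{u_{3i-2}, u_{3i}, u_{3i+2}\} \subseteq S_2$ forces $|S_2| = 3$ and $|S_1 \cup S_4| = 1$, so at least one of $b_{i-1}, b_{i+1}$ lies outside $S$ and has its three $C_2$-neighbours disjoint from $S_2$, giving a short path from $b_i$ into $G'' - S$; vertices of $V(C_4) - S_4$ are handled symmetrically.

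The main obstacle is the $4$-connectivity of $G''$ in Case B, which is tight because $G''$ is exactly $4$-regular, so no extra slack is available; the rest of the proof reduces to edge-counting together with the key structural observation that consecutive vertices of $C_1$ (respectively $C_4$) have pairwise disjoint neighbourhoods in $C_2$ (respectively $C_3$).
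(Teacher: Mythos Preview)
Your proof is correct and follows the same skeleton as the paper's: split on whether $S$ meets the outer cycles $C_1\cup C_4$, show that the middle strip $(V(C_2)\cup V(C_3))\setminus S$ stays connected, and then attach the surviving outer vertices. The differences are in packaging rather than substance. For the middle strip, the paper argues directly from $\min(t_2,t_3)\le 1$ (one of $C_2,C_3$ survives as a path to which every remaining vertex of the other attaches), whereas you prove the reusable lemma $\kappa(G'')=4$ via vertex-transitivity and Mader's bound $\kappa\ge \tfrac{2}{3}(d+1)$, which indeed yields $4$ when $d=4$; your sketched ``direct'' alternative via common neighbours is vaguer than the paper's one-line argument, but the Mader route is clean. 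For attaching $C_1$ and $C_4$, the paper splits into the subcases $t_1+t_4=1$ (where $C_1\setminus S$ and $C_4\setminus S$ are themselves connected and a global edge count suffices) and $t_1+t_4\ge 2$ (where $t_2,t_3\le 2$, so every outer vertex keeps a middle neighbour); you instead work vertex by vertex and handle the unique possible ``bad'' $b_i$ using the fact that distinct $b_j$'s have pairwise disjoint $C_2$-neighbourhoods. Your route trades the paper's subcase split for one extra structural observation; neither is materially shorter.
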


\begin{proof}
We  need to proof that any subset  $S\subset V(H_k)$ with $|S|\leq 4$ is not a separator of $H_k$.

Given $S\subset V(H_k)$ with $|S|\leq 4$,  let $t_i=|V(C_i)\cap S|$ for $1\leq i\leq 4$. If $t_i\leq 1$, then $C_i\setminus S$ is either a path or a cycle.
 For integer $1\leq i\leq 4$ with $j=i+1$ or $j=i-1$($j\in\{1,2,3,4\}$), by the definition of $\theta_{i,j}$, we have
\begin{eqnarray}\label{eq1} 
\theta_{i,j}&\geq& |E\big(u, V(C_{j})\setminus S\big)|= |E\big(u, V(C_{j})\big)|- |E\big(u, V(C_{j})\cap S\big)|\geq \theta_{i,j}-t_j. 
\end{eqnarray}  
Furthermore, for $1\leq i\leq 3$ 
 \begin{eqnarray}\label{eq2} 
 |E\big(V(C_i)\setminus S, V(C_{i+1}\big)\setminus S)|&\geq& |E\big(V(C_i), V(C_{i+1})\big)|- |E\big(V(C_i)\cap S, V(C_{i+1})\big)|\nonumber\\ 
&&- |E\big(V(C_i), V(C_{i+1})\cap S\big)|\nonumber\\ 
&\geq& \theta_{i,i+1}|V(C_i)|-\theta_{i,i+1}t_i-\theta_{i+1,i}t_{i+1}. 
\end{eqnarray}   

Now, we consider two cases based on the values of $t_i$($i=1,2,3,4$).

{\bf Case 1}~$t_2+t_3=4$, implying $t_1+t_4=0$. 
It follows that $H_k[V(C_1)\setminus S]=C_1$ and $H_k[V(C_4)\setminus S]=C_4$. 
From equation \ref{eq1}, 
for any vertex $v\in V(C_2)$(or $v\in C_3$), we find $|E\big(v, V(C_1)\setminus S\big)|=1$(or $|E\big(v, V(C_4)\setminus S\big)|=1$).
Consequently, the induced subgraph $H_k[\big(V(C_1)\cup V(C_2)\big)\setminus S]$ and  $H_k[\big(V(C_3)\cup V(C_4)\big)\setminus S]$ are both connected. 
Additionally,  from equation \ref{eq2}, 
we obtain  
$$|E\big(V(C_2)\setminus S, V(C_{3})\setminus S\big)|\geq 6k-2t_2-2t_{3}=6k-8\geq 0.$$
Thus, there is at least one edge connecting 
$H_k[\big(V(C_1)\cup V(C_2)\big)\setminus S]$ and $H_k[\big(V(C_3)\cup V(C_4)\big)\setminus S]$, confirming that $S$ is not a separator of $H_k$.

{\bf Case 2}~$t_2+t_3\leq 3$, which means 
$t_1+t_4\geq 1$. Here, there exists $j\in \{2,3\}$ such that $t_j\leq 1$, leading to $H_k[V(C_j)\setminus S]$ being a path or a cycle. Additionally, from equation \ref{eq1},
for any vertex $u\in V(C_{5-j})$(with $j\in \{2,3\}$), we find $|E\big(u, V(C_{j})\setminus S\big)|\geq 1$.Therefore,   $H_k[\big(V(C_2)\cup V(C_3)\big)\setminus S]$ remains connected. 

When $t_1+t_4=1$, both $H_k[C_1\setminus S] $ and $H_k[C_4\setminus S] $ form a path or a cycle. From equation \ref{eq2}, we have
$$|E\big(V(C_1)\setminus S, V(C_2)\setminus S)|\geq 3k-3t_1-t_2>0,~~|E\big(V(C_4)\setminus S, V(C_3)\setminus S\big)|\geq 3k-3t_4-t_3>0.$$ 
Thus, both $H_k[C_1\setminus S] $ and $H_k[C_4\setminus S] $ connect to $H_k[\big(V(C_2)\cup V(C_3)\big)\setminus S]$, indicating $S$ is not a separator of  $H_k$.
 
 When $t_1+t_4\geq 2$,  then $t_2+t_3\leq 2$. For any vertex  $u_1\in V(C_1)\setminus S$ and $u_4\in V(C_4)\setminus S$, according to equation \ref{eq1},  
  $$|E\big(u_1, V(C_{2})\setminus S\big)|\geq 3-t_2\geq 1,~~|E\big(u_4, V(C_{3})\setminus S\big)|\geq 3-t_3\geq 1.$$ 
  Thus, every vertex in $\big(V(C_1)\cup V(C_4)\big)\setminus S$ is connected to vertices in 
$H_k[\big(V(C_2)\cup V(C_3)\big)\setminus S]$.  Therefore, $S$ is not a separator of $H_k$.
\end{proof}

Note that the graph $H_k$ is bipartite because it is colored in black and white, as shown in Figure \ref{fig3}.
We obtained a graph $G_k$ from $H_k$ by adding a new black vertex  and connecting it to five white vertices in  $C_1$ of $H_k$. Note that $G_k$ is the final graph we intend to construct.

 Clearly, $G_k$ is 1-planar. Moreover, based on the above construction, we have the following claim.

\begin{claim}\label{cla-2}
$G_k$  is bipartite.
\end{claim}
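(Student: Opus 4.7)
The plan is to make the black/white $2$-coloring promised by the paper completely explicit and then observe that the extra vertex in $G_k$ respects it. Since $k \ge 10$ is even, each of $C_1,C_2,C_3,C_4$ has even length, so the color of a vertex may reasonably be determined by the parity of its index. Concretely, I would color $b_i$ black when $i$ is odd and white when $i$ is even; $u_j$ white when $j$ is odd and black when $j$ is even; $v_j$ white when $j$ is odd and black when $j$ is even; and $a_i$ black when $i$ is odd and white when $i$ is even.

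Next I would verify that this assignment is a proper $2$-coloring of $H_k$ by checking each edge class. Consecutive indices on any of the four cycles have opposite parity, so every cycle edge is bichromatic. For $E(C_1,C_2)$, the neighbors $u_{3i-2}, u_{3i}, u_{3i+2}$ of $b_i$ all have indices of the same parity as $i$ (the numbers $3i$ and $3i\pm 2$ agree with $i$ mod $2$), so each of these three neighbors has the color opposite to $b_i$. The check for $E(C_4,C_3)$ is identical with $a$ and $v$ in place of $b$ and $u$. For $E(C_2,C_3)$, the neighbors $v_{i-1}, v_{i+1}$ of $u_i$ have indices of parity opposite to $i$, so they are colored opposite to $u_i$. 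This confirms that $H_k$ is bipartite with the stated black/white partition.

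Finally, $G_k$ is obtained from $H_k$ by adding one new black vertex $w$ joined to five white vertices of $C_1$; under the coloring above, the white vertices of $C_1$ are exactly $b_2, b_4, \ldots, b_k$, and since $k/2 \ge 5$ there are at least five such vertices available. Putting $w$ into the black class keeps every new edge bichromatic, so $G_k$ inherits a proper $2$-coloring from $H_k$ and is bipartite.

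There is no real obstacle here beyond careful bookkeeping; the only point one must not forget is that $3i,\ 3i\pm 2$ all share the parity of $i$, which is what makes the two ``triple'' edge sets $E(C_1,C_2)$ and $E(C_4,C_3)$ compatible with the parity-based coloring on $C_2$ and $C_3$.
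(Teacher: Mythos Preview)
Your argument is correct and is essentially the paper's own approach made explicit: the paper simply asserts that $H_k$ is bipartite ``because it is colored in black and white, as shown in Figure~\ref{fig3}'' and then observes that the added black vertex of $G_k$ is joined only to white vertices of $C_1$. Your parity-based coloring is exactly the black/white coloring depicted in that figure, and your edge-by-edge verification supplies the details the paper leaves to the picture.
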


Consequently, by Claim \ref{cla-2} and Lemma \ref{lem-1}(i), $G_k$ is perfect.

\begin{claim}
$\kappa(G_k)=5$.
\end{claim}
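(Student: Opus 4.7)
The plan is to prove $\kappa(G_k)=5$ by establishing the two inequalities $\kappa(G_k)\le 5$ and $\kappa(G_k)\ge 5$ separately, using what we already know about $H_k$ together with Lemma~\ref{lem-4}.

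For the upper bound, I would simply exhibit a vertex whose degree equals $5$, since in any non-complete graph the connectivity is bounded by the minimum degree. The new black vertex $y$ added to form $G_k$ is, by construction, joined to exactly five white vertices of $C_1$ and to no other vertex; hence $d_{G_k}(y)=5$. Because $G_k$ has many more than $6$ vertices and is clearly not complete, this gives $\kappa(G_k)\le \delta(G_k)\le 5$.

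For the lower bound, I would invoke Lemma~\ref{lem-4} directly: $H_k$ is $5$-connected by the previous claim, and $G_k$ is obtained from $H_k$ by adding a single new vertex $y$ adjacent to exactly five vertices of $H_k$, which is at least $5$. Therefore $G_k$ is $5$-connected, i.e.\ $\kappa(G_k)\ge 5$.

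Combining the two bounds yields $\kappa(G_k)=5$, as required. I do not foresee a genuine obstacle here; the only thing to double-check is that the previous claim really does give $\kappa(H_k)\ge 5$ (not merely $\le 5$), which it does, and that the five neighbours of $y$ in $C_1$ are distinct vertices of $H_k$, which is automatic from the construction. Hence the argument is a one-line application of Lemma~\ref{lem-4} plus the degree observation, and no separate case analysis on the structure of $G_k$ is needed.
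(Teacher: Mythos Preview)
Your proposal is correct and matches the paper's proof essentially line for line: the paper also invokes Lemma~\ref{lem-4} with $\kappa(H_k)=5$ to get $\kappa(G_k)\ge 5$, and then notes the existence of a degree-$5$ vertex for the upper bound. The only detail the paper makes explicit that you leave implicit is the hypothesis $k\ge 10$, which guarantees that $C_1$ contains at least five white vertices so that the five neighbours of $y$ can indeed be chosen distinct.
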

\begin{proof}
Since $\kappa(H_k)=5$, by Lemma \ref{lem-4} and considering that $k\ge 10$, it follows that $\kappa(G_k)\ge 5$.  Additionally, since $G_k$ has vertices with degree 5, we conclude $\kappa(G_k)= 5$.
\end{proof}

Since $G_k$ is defined to have  $|V(G_k)|=8k+1$, which is odd, it follows from Claim \ref{cla-2} and Lemma \ref{lem-3} that $G_k$ is non-Hamiltonian.
\end{proof}
\begin{figure}
  \centering

\begin{tikzpicture}[scale=0.48, 
    whitenode/.style={circle, draw, fill=white, inner sep=0pt, minimum size=5pt},  
    blacknode/.style={circle, draw, fill=black, inner sep=0pt, minimum size=5pt},  
    blueedge/.style={draw, blue}, 
    rededge/.style={draw,thick, red} 
]  
		\node [style=whitenode] (0) at (4, 1) {};
		\node [style=blacknode] (1) at (3.89074, 2.03956) {};
		\node [style=whitenode] (2) at (3.56773, 3.03368) {};
		\node [style=blacknode] (3) at (3.04508, 3.93893) {};
		\node [style=whitenode] (4) at (2.34565, 4.71572) {};
		\node [style=blacknode] (5) at (1.5, 5.33013) {};
		\node [style=whitenode] (6) at (0.54508, 5.75528) {};
		\node [style=blacknode] (7) at (-0.477358, 5.97261) {};
		\node [style=whitenode] (8) at (-1.52264, 5.97261) {};
		\node [style=blacknode] (9) at (-2.54508, 5.75528) {};
		\node [style=whitenode] (10) at (-3.5, 5.33013) {};
		\node [style=blacknode] (11) at (-4.34565, 4.71572) {};
		\node [style=whitenode] (12) at (-5.04508, 3.93893) {};
		\node [style=blacknode] (13) at (-5.56773, 3.03368) {};
		\node [style=whitenode] (14) at (-5.89074, 2.03956) {};
		\node [style=blacknode] (15) at (-6, 1) {};
		\node [style=whitenode] (16) at (-5.89074, -0.03956) {};
		\node [style=blacknode] (17) at (-5.56773, -1.03368) {};
		\node [style=whitenode] (18) at (-5.04508, -1.93893) {};
		\node [style=blacknode] (19) at (-4.34565, -2.71572) {};
		\node [style=whitenode] (20) at (-3.5, -3.33013) {};
		\node [style=blacknode] (21) at (-2.54508, -3.75528) {};
		\node [style=whitenode] (22) at (-1.52264, -3.97261) {};
		\node [style=blacknode] (23) at (-0.477358, -3.97261) {};
		\node [style=whitenode] (24) at (0.54508, -3.75528) {};
		\node [style=blacknode] (25) at (1.5, -3.33013) {};
		\node [style=whitenode] (26) at (2.34565, -2.71572) {};
		\node [style=blacknode] (27) at (3.04508, -1.93893) {};
		\node [style=whitenode] (28) at (3.56773, -1.03368) {};
		\node [style=blacknode] (29) at (3.89074, -0.03956) {};
		\node [style=whitenode] (30) at (2.33333, 1) {};
		\node [style=blacknode] (31) at (2.26049, 1.69304) {};
		\node [style=whitenode] (32) at (2.04515, 2.35579) {};
		\node [style=blacknode] (33) at (1.69672, 2.95928) {};
		\node [style=whitenode] (34) at (1.23044, 3.47715) {};
		\node [style=blacknode] (35) at (0.66667, 3.88675) {};
		\node [style=whitenode] (36) at (0.03006, 4.17019) {};
		\node [style=blacknode] (37) at (-0.651572, 4.31507) {};
		\node [style=whitenode] (38) at (-1.34843, 4.31507) {};
		\node [style=blacknode] (39) at (-2.03006, 4.17019) {};
		\node [style=whitenode] (40) at (-2.66667, 3.88675) {};
		\node [style=blacknode] (41) at (-3.23044, 3.47715) {};
		\node [style=whitenode] (42) at (-3.69672, 2.95928) {};
		\node [style=blacknode] (43) at (-4.04515, 2.35579) {};
		\node [style=whitenode] (44) at (-4.26049, 1.69304) {};
		\node [style=blacknode] (45) at (-4.33333, 1) {};
		\node [style=whitenode] (46) at (-4.26049, 0.306961) {};
		\node [style=blacknode] (47) at (-4.04515, -0.35579) {};
		\node [style=whitenode] (48) at (-3.69672, -0.95928) {};
		\node [style=blacknode] (49) at (-3.23044, -1.47715) {};
		\node [style=whitenode] (50) at (-2.66667, -1.88675) {};
		\node [style=blacknode] (51) at (-2.03006, -2.17019) {};
		\node [style=whitenode] (52) at (-1.34843, -2.31507) {};
		\node [style=blacknode] (53) at (-0.651572, -2.31507) {};
		\node [style=whitenode] (54) at (0.03006, -2.17019) {};
		\node [style=blacknode] (55) at (0.66667, -1.88675) {};
		\node [style=whitenode] (56) at (1.23044, -1.47715) {};
		\node [style=blacknode] (57) at (1.69672, -0.95928) {};
		\node [style=whitenode] (58) at (2.04515, -0.35579) {};
		\node [style=blacknode] (59) at (2.26049, 0.306961) {};
		\node [style=blacknode] (60) at (0.66667, 1) {};
		\node [style=whitenode] (61) at (0.34836, 1.97964) {};
		\node [style=blacknode] (62) at (-0.484972, 2.58509) {};
		\node [style=whitenode] (63) at (-1.51503, 2.58509) {};
		\node [style=blacknode] (64) at (-2.34836, 1.97964) {};
		\node [style=whitenode] (65) at (-2.66667, 1) {};
		\node [style=blacknode] (66) at (-2.34836, 0.020358) {};
		\node [style=whitenode] (67) at (-1.51503, -0.58509) {};
		\node [style=blacknode] (68) at (-0.484972, -0.58509) {};
		\node [style=whitenode] (69) at (0.34836, 0.020358) {};
		\node (C1) at (-0.5, 0.020358) {${\color{pink!80} C_1}$};
        \node (C2) at (3, 0.020358) {${\color{pink!80} C_2}$};
       \node (C3) at (5, 0.020358) {${\color{pink!80} C_3}$};
        \node (C4) at (10, 0.020358) {${\color{pink!80} C_4}$};
		\node [style=blacknode] (70) at (9, 1) {};
		\node [style=whitenode] (71) at (7.09017, 6.87785) {};
		\node [style=blacknode] (72) at (2.09017, 10.5106) {};
		\node [style=whitenode] (73) at (-4.09017, 10.5106) {};
		\node [style=blacknode] (74) at (-9.09017, 6.87785) {};
		\node [style=whitenode] (75) at (-11, 1) {};
		\node [style=blacknode] (76) at (-9.09017, -4.87785) {};
		\node [style=whitenode] (77) at (-4.09017, -8.51057) {};
		\node [style=blacknode] (78) at (2.09017, -8.51057) {};
		\node [style=whitenode] (79) at (7.09017, -4.87785) {};
		\node  (82) at (-0.75, -1) {$\cdots$};
		\node  (84) at (-0.75, -9) {$\cdots$};
		\node  (85) at (-0.4, -3.75) {$\cdots$};
		\node (86) at (-0.5, -3) {$\cdots$};

		\draw [style=blueedge] (0) to (1);
		\draw [style=blueedge] (1) to (2);
		\draw [style=blueedge] (2) to (3);
		\draw [style=blueedge] (3) to (4);
		\draw [style=blueedge] (4) to (5);
		\draw [style=blueedge] (5) to (6);
		\draw [style=blueedge] (6) to (7);
		\draw [style=blueedge] (7) to (8);
		\draw [style=blueedge] (8) to (9);
		\draw [style=blueedge] (9) to (10);
		\draw [style=blueedge] (10) to (11);
		\draw [style=blueedge] (11) to (12);
		\draw [style=blueedge] (12) to (13);
		\draw [style=blueedge] (13) to (14);
		\draw [style=blueedge] (14) to (15);
		\draw [style=blueedge] (15) to (16);
		\draw [style=blueedge] (16) to (17);
		\draw [style=blueedge] (17) to (18);
		\draw [style=blueedge] (18) to (19);
		\draw [style=blueedge] (19) to (20);
		\draw [style=blueedge] (20) to (21);
		\draw [style=blueedge] (21) to (22);
		\draw [style=blueedge] (22) to (23);
		\draw [style=blueedge] (23) to (24);
		\draw [style=blueedge] (24) to (25);
		\draw [style=blueedge] (25) to (26);
		\draw [style=blueedge] (26) to (27);
		\draw [style=blueedge] (27) to (28);
		\draw [style=blueedge] (28) to (29);
		\draw [style=blueedge] (29) to (0);
		\draw [style=blueedge] (30) to (31);
		\draw [style=blueedge] (31) to (32);
		\draw [style=blueedge] (32) to (33);
		\draw [style=blueedge] (33) to (34);
		\draw [style=blueedge] (34) to (35);
		\draw [style=blueedge] (35) to (36);
		\draw [style=blueedge] (36) to (37);
		\draw [style=blueedge] (37) to (38);
		\draw [style=blueedge] (38) to (39);
		\draw [style=blueedge] (39) to (40);
		\draw [style=blueedge] (40) to (41);
		\draw [style=blueedge] (41) to (42);
		\draw [style=blueedge] (42) to (43);
		\draw [style=blueedge] (43) to (44);
		\draw [style=blueedge] (44) to (45);
		\draw [style=blueedge] (45) to (46);
		\draw [style=blueedge] (46) to (47);
		\draw [style=blueedge] (47) to (48);
		\draw [style=blueedge] (48) to (49);
		\draw [style=blueedge] (49) to (50);
		\draw [style=blueedge] (50) to (51);
		\draw [style=blueedge] (51) to (52);
		\draw [style=blueedge] (52) to (53);
		\draw [style=blueedge] (53) to (54);
		\draw [style=blueedge] (54) to (55);
		\draw [style=blueedge] (55) to (56);
		\draw [style=blueedge] (56) to (57);
		\draw [style=blueedge] (57) to (58);
		\draw [style=blueedge] (58) to (59);
		\draw [style=blueedge] (59) to (30);
		\draw [style=blueedge] (0) to (59);
		\draw [style=blueedge] (0) to (31);
		\draw [style=blueedge] (1) to (30);
		\draw [style=blueedge] (1) to (32);
		\draw [style=blueedge] (2) to (31);
		\draw [style=blueedge] (2) to (33);
		\draw [style=blueedge] (3) to (32);
		\draw [style=blueedge] (3) to (34);
		\draw [style=blueedge] (4) to (33);
		\draw [style=blueedge] (4) to (35);
		\draw [style=blueedge] (5) to (34);
		\draw [style=blueedge] (5) to (36);
		\draw [style=blueedge] (6) to (35);
		\draw [style=blueedge] (6) to (37);
		\draw [style=blueedge] (7) to (36);
		\draw [style=blueedge] (7) to (38);
		\draw [style=blueedge] (8) to (37);
		\draw [style=blueedge] (8) to (39);
		\draw [style=blueedge] (9) to (38);
		\draw [style=blueedge] (9) to (40);
		\draw [style=blueedge] (10) to (39);
		\draw [style=blueedge] (10) to (41);
		\draw [style=blueedge] (11) to (40);
		\draw [style=blueedge] (11) to (42);
		\draw [style=blueedge] (12) to (41);
		\draw [style=blueedge] (12) to (43);
		\draw [style=blueedge] (13) to (42);
		\draw [style=blueedge] (13) to (44);
		\draw [style=blueedge] (14) to (43);
		\draw [style=blueedge] (14) to (45);
		\draw [style=blueedge] (15) to (44);
		\draw [style=blueedge] (15) to (46);
		\draw [style=blueedge] (16) to (45);
		\draw [style=blueedge] (16) to (47);
		\draw [style=blueedge] (17) to (46);
		\draw [style=blueedge] (17) to (48);
		\draw [style=blueedge] (18) to (47);
		\draw [style=blueedge] (18) to (49);
		\draw [style=blueedge] (19) to (48);
		\draw [style=blueedge] (19) to (50);
		\draw [style=blueedge] (20) to (49);
		\draw [style=blueedge] (20) to (51);
		\draw [style=blueedge] (21) to (50);
		\draw [style=blueedge] (21) to (52);
		\draw [style=blueedge] (22) to (51);
		\draw [style=blueedge] (22) to (53);
		\draw [style=blueedge] (23) to (52);
		\draw [style=blueedge] (23) to (54);
		\draw [style=blueedge] (24) to (53);
		\draw [style=blueedge] (24) to (55);
		\draw [style=blueedge] (25) to (54);
		\draw [style=blueedge] (25) to (56);
		\draw [style=blueedge] (26) to (55);
		\draw [style=blueedge] (26) to (57);
		\draw [style=blueedge] (27) to (56);
		\draw [style=blueedge] (27) to (58);
		\draw [style=blueedge] (28) to (57);
		\draw [style=blueedge] (28) to (59);
		\draw [style=blueedge] (29) to (58);
		\draw [style=blueedge] (29) to (30);
		\draw [style=blueedge] (60) to (61);
		\draw [style=blueedge] (61) to (62);
		\draw [style=blueedge] (62) to (63);
		\draw [style=blueedge] (63) to (64);
		\draw [style=blueedge] (64) to (65);
		\draw [style=blueedge] (65) to (66);
		\draw [style=blueedge] (66) to (67);
		\draw [style=blueedge] (67) to (68);
		\draw [style=blueedge] (68) to (69);
		\draw [style=blueedge] (69) to (60);
		\draw [style=blueedge] (60) to (30);
		\draw [style=blueedge] (60) to (32);
		\draw [style=blueedge] (60) to (34);
		\draw [style=blueedge] (61) to (33);
		\draw [style=blueedge] (61) to (35);
		\draw [style=blueedge] (61) to (37);
		\draw [style=blueedge] (62) to (36);
		\draw [style=blueedge] (62) to (38);
		\draw [style=blueedge] (62) to (40);
		\draw [style=blueedge] (63) to (39);
		\draw [style=blueedge] (63) to (41);
		\draw [style=blueedge] (63) to (43);
		\draw [style=blueedge] (64) to (42);
		\draw [style=blueedge] (64) to (44);
		\draw [style=blueedge] (64) to (46);
		\draw [style=blueedge] (65) to (45);
		\draw [style=blueedge] (65) to (47);
		\draw [style=blueedge] (65) to (49);
		\draw [style=blueedge] (66) to (48);
		\draw [style=blueedge] (66) to (50);
		\draw [style=blueedge] (66) to (52);
		\draw [style=blueedge] (67) to (51);
		\draw [style=blueedge] (67) to (53);
		\draw [style=blueedge] (67) to (55);
		\draw [style=blueedge] (68) to (54);
		\draw [style=blueedge] (68) to (56);
		\draw [style=blueedge] (68) to (58);
		\draw [style=blueedge] (69) to (57);
		\draw [style=blueedge] (69) to (59);
		\draw [style=blueedge] (69) to (31);
		\draw [style=blueedge] (70) to (71);
		\draw [style=blueedge] (71) to (72);
		\draw [style=blueedge] (72) to (73);
		\draw [style=blueedge] (73) to (74);
		\draw [style=blueedge] (74) to (75);
		\draw [style=blueedge] (75) to (76);
		\draw [style=blueedge] (76) to (77);
		\draw [style=blueedge] (77) to (78);
		\draw [style=blueedge] (78) to (79);
		\draw [style=blueedge] (79) to (70);
		\draw [style=blueedge] (70) to (0);
		\draw [style=blueedge] (70) to (2);
		\draw [style=blueedge] (70) to (4);
		\draw [style=blueedge] (71) to (3);
		\draw [style=blueedge] (71) to (5);
		\draw [style=blueedge] (71) to (7);
		\draw [style=blueedge] (72) to (6);
		\draw [style=blueedge] (72) to (8);
		\draw [style=blueedge] (72) to (10);
		\draw [style=blueedge] (73) to (9);
		\draw [style=blueedge] (73) to (11);
		\draw [style=blueedge] (73) to (13);
		\draw [style=blueedge] (74) to (12);
		\draw [style=blueedge] (74) to (14);
		\draw [style=blueedge] (74) to (16);
		\draw [style=blueedge] (75) to (15);
		\draw [style=blueedge] (75) to (17);
		\draw [style=blueedge] (75) to (19);
		\draw [style=blueedge] (76) to (18);
		\draw [style=blueedge] (76) to (20);
		\draw [style=blueedge] (76) to (22);
		\draw [style=blueedge] (77) to (21);
		\draw [style=blueedge] (77) to (23);
		\draw [style=blueedge] (77) to (25);
		\draw [style=blueedge] (78) to (24);
		\draw [style=blueedge] (78) to (26);
		\draw [style=blueedge] (78) to (28);
		\draw [style=blueedge] (79) to (27);
		\draw [style=blueedge] (79) to (29);
		\draw [style=blueedge] (79) to (1);

\end{tikzpicture}
  \caption{A 1-planar drawing of $H_k$}\label{fig3}
\end{figure}
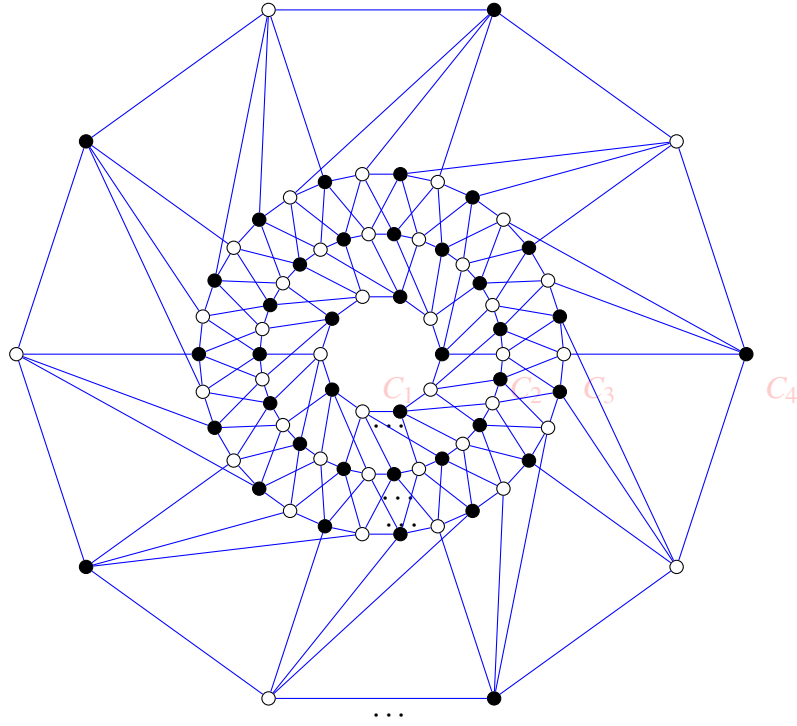

\section{Concluding Remarks}

Theorem \ref{main1} indicates that  connectivity of 5 does not guarantee that 1-planar perfect graphs are Hamiltonian. However, the Hamiltonicity of 6-connected and 7-connected 1-planar perfect graphs remains unknown. Thus,  we propose the following problem with a stronger condition than that of Problem \ref{p1}.

 \begin{prob}
 Is every $6$-connected or $7$-connected $1$-planar perfect graph Hamiltonian?
 \end{prob}

Let $\beta(n)$ represent the maximum number of edges in bipartite $1$-planar graphs of given order $n \geq 4$. Karpov \cite{Karpov} proved that
$$
\beta(n)= \begin{cases}3 n-8 & \text { for even } n \neq 6; \\ 3 n-9 & \text { for odd } n \text { or } n=6.\end{cases}
$$
 This implies that the minimum degree of every $1$-planar bipartite graph is at most 5, resulting in the maximum connectivity  of $5$. Thus, if non-Hamiltonian perfect $1$-planar graphs with connectivity $6$ or $7$ exist, they must be non-bipartite.
 
  Recall that every $1$-planar
graph  with $n\ge 3$ vertices has at most $4n-8$ edges. A simple $1$-planar graph $G$ with $n$ vertices is called \emph{optimal }if it has exactly $4n-8$ edges.
The following theorem is easily derived from existing results.
\begin{thm} 
Let $G$ be an optimal $1$-planar graph with $n$ vertices. Then $G$ is not chordal.
\end{thm}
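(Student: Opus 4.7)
The plan is to derive a contradiction from three standard facts. First, by Dirac's classical theorem on chordal graphs, every non-complete chordal graph contains a simplicial vertex, i.e., a vertex whose neighborhood induces a clique. Second, since $K_{7}$ is not $1$-planar, the clique number of any $1$-planar graph is at most $6$. Third, every optimal $1$-planar graph $G$ satisfies $\delta(G)\ge 6$; this follows from the well-known structural description of optimal $1$-planar graphs (Schumacher, Suzuki), in which the uncrossed edges form a quadrangulation of the sphere and each skeleton-vertex of degree $d$ acquires $d$ further neighbors through face-diagonals, while the minimum skeleton-degree must be at least $3$ (otherwise the forced pair of face-diagonals would produce multi-edges).

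Suppose, for contradiction, that $G$ is optimal $1$-planar and chordal. First I would rule out the complete case: the equation $\binom{n}{2}=4n-8$ has no positive integer solution, and $K_{n}$ is not $1$-planar for $n\ge 7$, so $G$ cannot be a complete graph. Dirac's theorem then supplies a simplicial vertex $v\in V(G)$, and $\{v\}\cup N_{G}(v)$ is a clique in $G$. Combined with the bound $\omega(G)\le 6$, this forces $\deg_{G}(v)\le 5$, contradicting $\delta(G)\ge 6$, and completing the proof.

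The only step requiring any real care is the inequality $\delta(G)\ge 6$; I would invoke it directly as a known consequence of the quadrangulation characterization of optimal $1$-planar graphs, rather than reprove the characterization. The remaining ingredients (Dirac's simplicial-vertex theorem, and $\omega(G)\le 6$ via the non-$1$-planarity of $K_{7}$) are immediate from the literature and require no further argument.
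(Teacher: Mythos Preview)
Your proof is correct but proceeds along a genuinely different line from the paper's. The paper's argument is an edge count: optimal $1$-planar graphs are $4$-connected (Suzuki), and by the authors' own structural result in \cite{Zhang} every $4$-connected chordal $1$-planar graph is a $4$-tree and hence has exactly $4n-10$ edges, which is incompatible with the defining edge count $4n-8$. Your argument instead pits a lower bound on the minimum degree against an upper bound on the clique number: $\delta(G)\ge 6$ from the quadrangulation structure of optimal $1$-planar graphs, $\omega(G)\le 6$ from the non-$1$-planarity of $K_{7}$, and Dirac's simplicial-vertex theorem then forces $\delta(G)\le\omega(G)-1\le 5$, a contradiction. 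Both routes ultimately rest on one nontrivial structural citation (the paper on $4$-connectivity plus the $4$-tree classification, you on $\delta\ge 6$); the paper's version has the advantage of needing only the weaker connectivity statement about optimal $1$-planar graphs, while yours avoids the rather specialized $4$-tree result and is the more portable template, since the inequality $\delta(G)\le\omega(G)-1$ holds in \emph{every} chordal graph. As a minor simplification, your separate treatment of the complete case is unnecessary: Dirac's theorem already yields a simplicial vertex in any chordal graph, complete or not, so you may skip that paragraph entirely.
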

\begin{proof}
It is known that optimal 1-planar graphs are 4-connected \cite{Suzuki2010}. Zhang et al. \cite{Zhang} showed that any $4$-connected chordal $1$-planar graph with $n$ vertices is a $4$-tree, having exactly $4 n-10$ edges. Therefore, $G$ cannot be chordal, as desired.
\end{proof}

It is intriguing to extend this theorem to the perfection of optimal $1$-planar graphs. Notably, $ K_{2,2,2,2}$ is the unique optimal $1$-planar graph with $8$ vertices, which is perfect by direct verification. However, computer searches reveal  no optimal $1$- planar perfect  graphs with $ 10 \le n \le 20$ vertices. We thus  propose the following conjecture.
\begin{conj}
Every optimal $1$-planar graph with at least $10$ vertices is not perfect.
\end{conj}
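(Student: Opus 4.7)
The plan is to invoke the Strong Perfect Graph Theorem of Chudnovsky, Robertson, Seymour, and Thomas: a graph is perfect if and only if it contains no induced odd hole $C_{2k+1}$ with $k \ge 2$ and no induced odd antihole. Hence it suffices to exhibit, in every optimal $1$-planar graph on $n \ge 10$ vertices, one such forbidden induced subgraph, the most promising target being an induced $C_5$.

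I would proceed via the known structure theorem for optimal $1$-planar graphs: every optimal $1$-planar graph $G$ on $n$ vertices has a unique $1$-planar embedding whose planar skeleton $Q$ (the graph of non-crossed edges) is a simple $3$-connected quadrangulation, and $G$ is recovered from $Q$ by inserting both diagonals of every quadrangular face. Because $Q$ is bipartite with classes $X, Y$, each face $abcd$ (with $a,c \in X$ and $b,d \in Y$) induces a $K_4$ in $G$ via its four boundary edges together with the diagonals $ac$ and $bd$. Crucially, two distinct vertices are adjacent in $G$ if and only if they lie on a common face of $Q$, so the question of which induced subgraphs occur in $G$ reduces to a combinatorial question about the face structure of $Q$.

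Under this reformulation, an induced $C_5$ in $G$ corresponds to a cyclic sequence $v_1, \ldots, v_5$ of vertices of $Q$ such that each consecutive pair lies on a common face while each non-consecutive pair does not. I would search for such configurations by analysing short closed walks in the face-adjacency graph of $Q$ and selecting one vertex per face subject to the non-coincidence constraints. The case $n = 8$ is exceptional precisely because the cube skeleton underlying $K_{2,2,2,2}$ is too symmetric to admit such a configuration without inducing unwanted chords, which is consistent with $K_{2,2,2,2}$ being perfect; for $n \ge 10$, however, $Q$ has at least eight faces and strictly more varied local structure, so such a configuration should become unavoidable.

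The main obstacle is exactly the non-coincidence constraint. Even when five vertices look correct at first glance, each non-consecutive pair may accidentally share some third face of $Q$, producing an unwanted chord in the candidate $C_5$. A rigorous argument will require a careful case analysis of the face-adjacency structure of $Q$, organised by how faces meet (in an edge, a single vertex, or not at all), and may well need to fall back on larger odd holes or on odd antiholes in the more rigid configurations. The boundary case $n = 10$ is especially delicate and would likely have to be dispatched by direct enumeration of the $3$-connected quadrangulations on ten vertices.
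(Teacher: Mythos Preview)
The statement you are attempting to prove is a \emph{conjecture} in the paper, not a theorem. The authors do not give a proof; their sole evidence is a computer search showing that no optimal $1$-planar perfect graph exists for $10 \le n \le 20$. There is therefore no ``paper's own proof'' to compare your proposal against.

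As for the proposal itself, it is a reasonable research outline but not a proof, and you essentially say so yourself. The structural ingredients you cite are correct: optimal $1$-planar graphs arise from simple $3$-connected quadrangulations by inserting both diagonals in every face, and adjacency in $G$ is exactly ``sharing a face of $Q$''. Reducing the search for an induced $C_5$ (or larger odd hole) to a configuration in the face structure of $Q$ is the natural move. However, the crux---showing that for \emph{every} $3$-connected simple quadrangulation on at least $10$ vertices one can always select five (or $2k+1$) vertices with the required coincidence/non-coincidence pattern---is precisely the part you leave open. Your final two paragraphs flag the chord-avoidance constraint and concede that the argument ``may well need to fall back'' on other forbidden subgraphs and that $n=10$ ``would likely have to be dispatched by direct enumeration''. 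That is an honest assessment, but it means the proposal is a strategy, not a proof; the genuine content of the conjecture lies entirely in the step you have not carried out.
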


\section{Acknowledgment}
The first author would like to thank Misha Lavrov and LeechLattice for their valuable discussions.
We claim that there is no conflict of interest in our paper. No data was used for the research
described in the article.

%
%
%
\end{document}